\documentclass[12pt]{article} 

\usepackage{amsmath}
\usepackage{amsthm}
\usepackage{amsfonts}
\usepackage{mathrsfs}
\usepackage{stmaryrd}
\usepackage{setspace}
\usepackage{fullpage}
\usepackage{amssymb}
\usepackage{breqn}
\usepackage{enumitem}
\usepackage{bbold} 
\usepackage{authblk}
\usepackage{comment}
\usepackage{hyperref}
\usepackage{pgf,tikz}
\usepackage{graphicx}
\usepackage{subcaption}

\bibliographystyle{plain}

\newtheorem{thm}{Theorem}[section]

\newtheorem{proposition}[thm]{Proposition}

\newcommand\ex{\ensuremath{\mathrm{ex}}}

\newcommand\cH{{\mathcal H}}

\newcommand\cT{{\mathcal T}}

\newcommand{\ignore}[1]{}

\title{A note on the number of triangles in graphs without the suspension of a path on four vertices}
\author{Dániel Gerbner}

\date{\small Alfr\'ed R\'enyi Institute of Mathematics}

\begin{document}

\maketitle

\begin{abstract}
    The suspension of the path $P_4$ consists of a $P_4$ and an additional vertex connected to each of the four vertices, and is denoted by $\hat{P_4}$. The largest number of triangles in a $\hat{P_4}$-free $n$-vertex graph is denoted by $\ex(n,K_3,\hat{P_4})$. Mubayi and Mukherjee in 2020 showed that $ \ex(n,K_3,\hat{P_4})= n^2/8+O(n)$. We show that for sufficiently large $n$, $\ex(n,K_3,\hat{P_4})=\lfloor n^2/8\rfloor$.
    
\end{abstract}

\section{Introduction}

One of the main topics in extremal graph theory was initiated by Tur\'an \cite{T} and deals with the quantity $\ex(n,F)$, which is the largest number of edges in $n$-vertex  $F$-free graphs.

Generalized Tur\'an problems deal with the quantity $\ex(n,H,F)$, which is the largest number of copies of a subgraph $H$ in $n$-vertex $F$-free graphs. The systematic study of generalized Tur\'an problems was initiated by Alon and Shikhelman \cite{ALS2016}.

The \textit{suspension} $\hat{F}$ of a graph $F$ is obtained by adding an additional vertex $v$ to $F$ and for each vertex $u\in V(F)$, adding the edge $uv$.
Mubayi and Mukherjee \cite{mm} studied $\ex(n,K_3,\hat{F})$ for various different bipartite graphs $F$. In particular, they investigated $\ex(n,K_3,\hat{P_4})$, where $P_4$ denotes the path on four vertices. 

\begin{proposition}[Mubayi, Mukherjee \cite{mm}]\label{mumu} We have
$\lfloor n^2/8\rfloor-O(1) \le \ex(n,K_3,\hat{P_4})\le n^2/8+3n$.
\end{proposition}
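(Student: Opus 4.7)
The plan proves the two inequalities separately.

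For the \emph{lower bound}, I propose the following $\hat{P_4}$-free construction achieving $\lfloor n^2/8\rfloor - O(1)$ triangles. Partition $V = A \sqcup B$ with $|A| = \lfloor n/2\rfloor$ and $|B| = \lceil n/2\rceil$, make $A$ independent, place a (near-)perfect matching $M$ on $B$, and include every edge between $A$ and $B$. For $a \in A$, $N(a) = B$ induces the matching $M$; for $b \in B$ with match $b'$, $N(b) = A \cup \{b'\}$ induces the star $K_{1,|A|}$ centered at $b'$. Both are $P_4$-free as subgraphs, so $G$ is $\hat{P_4}$-free, and the triangles are exactly the triples $\{a,b,b'\}$ with $a \in A$ and $\{b,b'\} \in M$, totaling $|A|\cdot|M| = \lfloor n^2/8\rfloor - O(1)$.

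For the \emph{upper bound}, the starting observation is that for every vertex $v$ of a $\hat{P_4}$-free graph $G$, the subgraph $G[N(v)]$ contains no $P_4$ as a subgraph --- otherwise $v$ together with that $P_4$ would form a $\hat{P_4}$. A short classification shows that connected graphs with no $P_4$ subgraph are exactly $K_3$ and the stars $K_{1,k}$ (with $K_{1,0}$ an isolated vertex); thus $G[N(v)]$ is a disjoint union of triangles and stars. Letting $s(v)$ be the number of star components, one has $t(v):=e(G[N(v)])=d(v)-s(v)$, and summing yields the identity
\[
3\,N_{K_3}(G) \;=\; 2e(G) \;-\; \sum_v s(v).
\]

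To derive $N_{K_3}(G)\le n^2/8+3n$, I would classify vertices by the structure of $N(v)$. Call $v$ \emph{matching-type} if all components of $N(v)$ are $K_{1,0}$ or $K_{1,1}$ (so $t(v)\le d(v)/2$), \emph{big-star-type} if $N(v)$ has some component $K_{1,k}$ with $k\ge 2$, and \emph{triangle-type} if $N(v)$ has a triangle component (equivalently, $v$ lies in some $K_4$). Two structural facts in $\hat{P_4}$-free $G$ are needed: (a) any two $K_4$'s share at most one vertex (else their shared edge has at least three common neighbors, forcing a $P_4$ in some neighborhood), so every $K_4$-edge has codegree exactly $2$; and (b) for every non-$K_4$ edge $uv$ with $c(uv)\ge 2$, the common neighbors of $u,v$ form an independent set, and $u$ (resp.\ $v$) is the center of a star component $K_{1,c(uv)}$ in $N(v)$ (resp.\ $N(u)$). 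Summing the local constraint that the triangle components, big-star components, and small-star/isolated components in $G[N(v)]$ collectively occupy only $d(v)$ slots, together with the observation that matching-type vertices contribute only $d(v)/2$ to $t(v)$, is intended to give the bound after careful arithmetic.

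The main obstacle is the precise accounting, as each of the three vertex/edge types can a priori contribute $\Theta(n^2)$ to $\sum_v t(v)=3N_{K_3}(G)$. The extremal construction exhibits exactly how the trade-off balances at $n^2/8$: half the vertices are matching-type, half are big-star-type (paired along a matching of $n/4$ heavy edges of codegree $n/2$), there are no triangle-type vertices, and the remaining edges are bipartite light edges of codegree $1$. The argument must show that any deviation from this balance --- more big-star or triangle-type vertices, or larger codegrees --- is offset by structural losses elsewhere, keeping the total at $n^2/8+O(n)$ and absorbed into the advertised $+3n$ slack.
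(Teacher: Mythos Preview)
Your lower-bound construction is correct and is exactly the one the paper uses.

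Your upper-bound argument, however, is not a proof: it is a framework with the decisive step left undone. You set up the identity $3N_{K_3}(G)=2e(G)-\sum_v s(v)$ and a vertex classification, but then write that summing the local constraints ``is intended to give the bound after careful arithmetic'' and that ``the argument must show that any deviation \dots\ is offset by structural losses elsewhere.'' That is precisely the content of the proof, and you have not carried it out. Nothing in the structural facts (a) and (b) you list, correct as they are, forces $\sum_v t(v)\le 3n^2/8+O(n)$ without further work; in particular $2e(G)$ alone can be of order $n^2/2$, so the whole weight rests on controlling $\sum_v s(v)$ from below, which you never do.

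For comparison, the paper (following Mubayi--Mukherjee) splits into the $K_4$-free and $K_4$-containing cases. In the $K_4$-free case the paper gives a clean two-line argument you may find instructive: in a $K_4$-free $\hat{P_4}$-free graph, every triangle has at least two edges lying in no other triangle; selecting two such edges per triangle yields a triangle-free subgraph with twice as many edges as $G$ has triangles, and Mantel's theorem finishes. The $K_4$-containing case is then handled by removing a $K_4$ (which meets at most $n$ triangles, since each outside vertex has at most one neighbor in it) and inducting. Either you should complete your accounting explicitly, or adopt this case split, which avoids the delicate bookkeeping entirely.
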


We determine $\ex(n,K_3,\hat{P_4})$ exactly if $n$ is sufficiently large.

\begin{thm}\label{main} If $n$ is sufficently large, then
$\ex(n,K_3,\hat{P_4})=\lfloor n^2/8\rfloor$.
\end{thm}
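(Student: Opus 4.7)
The extremal graph $G^*_n$ is defined as follows: partition $V=A\sqcup B$ with $|A|$ the even integer maximizing $|A|(n-|A|)/2$ (for every $n$ this maximum equals $\lfloor n^2/8\rfloor$), take the complete bipartite graph on $A,B$, and add a perfect matching inside $A$. Every triangle of $G^*_n$ consists of a matching edge of $A$ together with a vertex of $B$, so $t(G^*_n)=\lfloor n^2/8\rfloor$, sharpening the lower bound of Proposition~\ref{mumu}. My task is therefore to show, for $n$ large, that every $\hat{P_4}$-free $G$ on $n$ vertices with $t(G)\ge\lfloor n^2/8\rfloor$ is isomorphic to $G^*_n$.

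\textbf{Local structure and stability.} The proof rests on the elementary observation that $G$ is $\hat{P_4}$-free iff $G[N(u)]$ contains no $P_4$ as a subgraph, for every vertex $u$. The connected graphs without a $P_4$-subgraph are exactly the stars $K_{1,m}$ and the triangle $K_3$, so $G[N(u)]$ is a disjoint union of stars and triangles, and
\[
t_u=e(G[N(u)])=d_u-s_u,
\]
where $s_u$ is the number of star components of $G[N(u)]$; summing yields $3t(G)=2e(G)-\sum_u s_u$. In the stability step, I combine the Mubayi--Mukherjee bound $t(G)\le n^2/8+3n$ with the hypothesis $t(G)\ge\lfloor n^2/8\rfloor$ to constrain $2e(G)-\sum s_u$ to within $O(n)$ of the extremal value. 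Using in addition the pair-count identity $3t(G)=\sum_{xy\in E(G)}|N(x)\cap N(y)|$ and a double count of neighborhood structures, I aim to extract an approximate bipartition $V=A\sqcup B$ with $|A|$ close to its extremal value, $G[A]$ close to a perfect matching, $G[B]$ with $o(n^2)$ edges, and nearly every pair in $A\times B$ an edge of $G$. Heuristically, $A$ consists of the vertices whose neighborhood is a single large star ($s_u=1$) and $B$ of those whose neighborhood is a near-matching ($s_u=\Theta(n)$).

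\textbf{Cleaning to the exact structure.} Armed with the rough bipartition, I tighten each piece using $\hat{P_4}$-freeness locally:
(a) $G[B]$ is empty. Given an edge $b_1b_2\in G[B]$, picking any matching edge $a_1a_2\in G[A]$ and any $a_3\in A\cap N(b_1)\cap N(b_2)$ (which exists since nearly every cross-pair is an edge) yields the $P_4$-subgraph $a_3\,b_2\,a_1\,a_2$ in $G[N(b_1)]$, creating a $\hat{P_4}$.
(b) $G[A]$ has maximum degree $1$. A vertex $a\in A$ with two $A$-neighbors $a_1,a_2$ forces $G[N(a)]$ to contain $K_{2,|B|}$ on $\{a_1,a_2\}\cup B$, which contains $P_4$ once $|B|\ge 2$.
(c) Every $A\times B$ pair is an edge and $|A|$ takes the optimal even value. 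After (a) and (b) every triangle of $G$ uses exactly one matching edge of $A$ and one vertex of $B$, so $t(G)\le(|A|/2)\,|B|\le\lfloor n^2/8\rfloor$, with equality forcing a perfect matching in $A$, every cross-edge to be present, and $|A|$ to be the optimal even value, giving $G\cong G^*_n$.
The main obstacle I foresee is the stability step, where the $O(n)$ slack in the Mubayi--Mukherjee bound must be converted into a concrete bipartition via careful double-counting rather than a mere average estimate; once the bipartition is in hand, the cleaning step is short thanks to the strictly local nature of $\hat{P_4}$-freeness.
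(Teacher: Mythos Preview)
Your outline follows the stability-plus-cleaning paradigm, which is a genuinely different route from the paper's. The paper never proves a stability statement: it splits on whether $G$ contains a $K_4$. In the $K_4$-free case a short Gy\H ori-type argument already gives the exact bound for \emph{all} $n$ (every triangle has two edges lying in no other triangle; these $2t(G)$ edges form a triangle-free graph, so Mantel yields $t(G)\le\lfloor n^2/8\rfloor$). In the $K_4$-containing case the paper uses progressive induction, peeling off a vertex in at most $(2n-5)/8$ triangles or a $K_4$ meeting at most $n-3$ triangles, and when neither is available it extracts from the forced structure around a $K_4$ a direct count of at most $(13n^2+262n+252)/108<\lfloor n^2/8\rfloor$ triangles. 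No bipartition is ever produced.

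The substantive gap in your proposal is exactly the step you flag: the stability argument is not carried out. The identity $3t(G)=2e(G)-\sum_u s_u$ together with $t(G)\ge n^2/8-O(1)$ controls only the global sum $\sum_u s_u$, not the individual $s_u$, so it does not by itself sort vertices into your classes $A$ (those with $s_u=1$) and $B$ (those with $s_u\approx n/4$); the promised ``double count of neighborhood structures'' is left unspecified. Even granting an approximate bipartition with $o(n^2)$ wrong cross-edges, your cleaning arguments (a) and (b) silently assume that \emph{every} relevant vertex has near-full cross-degree (e.g.\ in (b) you need two vertices of $B$ adjacent to all of $a,a_1,a_2$), whereas stability typically yields this only on average; a pass removing or reclassifying atypical vertices is missing. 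Finally, you are aiming for uniqueness of the extremal graph, which is stronger than the theorem and not claimed in the paper; this is not wrong, but it means you cannot fall back on the paper's short $K_4$-free argument, which gives the exact count without identifying the extremal graph.
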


We present the proof in Section 2. The proof uses a (special case of) a theorem of Gy\H ori \cite{gyori} on Berge triangles and progressive induction. However, we can incorporate the proof of Gy\H ori's result into our proof and use a simple version of progressive induction, this way making our proof self-contained. We introduce Berge hypergraphs, state Gy\H ori's result, show the connection to our problem and describe progressive induction in Section 3.

\section{Proof of Theorem \ref{main}}

Let us describe the lower bound first. We take an almost balanced complete bipartite graph on $n$ vertices and add a full matching to one of the parts. Then the neighborhood of every vertex is either a star or or a matching, thus this graph if $\hat{P_4}$-free. If $n=4k$, we take $K_{2k,2k}$ and add $k$ independent edges to one of the parts. If $n=4k+1$, we take $K_{2k,2k+1}$ and add $k$ independent edges to the smaller part. If $n=4k+2$, we take $K_{2k,2k+2}$ and add $k$ independent edges to the smaller part. If $n=4k+3$, we take $K_{2k+1,2k+2}$ and add $k+1$ independent edges to the larger part.

\smallskip

Let us continue with the proof of the upper bound.
 We will show that if $n\ge 525$ and $f(n):=\ex(n,K_3,\hat{P_4})-\lfloor n^2/8\rfloor>0$, then either $f(n)<f(n-1)$ or $f(n)<f(n-4)$. Let $K=\max\{f(n):n<525\}\le \binom{525}{3}$, then for $n\ge 525$ we have $f(n)<K$ (note that by Proposition \ref{mumu} we have $K\le 1575$). Similarly for $n\ge 529$ we have $f(n)<K-1$ and in general for $n\ge 525+4k$ we have $f(n)<K-k$. Therefore, for $n\ge 525+4K$ the statement holds.
 
Let $G$ be a $\hat{P_4}$-free $n$-vertex graph with $\ex(n,K_3,\hat{P_4})$ triangles.
Assume first that $G$ is $K_4$-free. Observe that every triangle in $G$ contains at least two edges that are not contained in any other triangle. Indeed, assume that $a,b,c$ forms a triangle and $ab$ and $bc$ are contained in other triangles $a,b,x$ and $b,c,y$. If $x=y$, then there is a $K_4$ with vertices $a,b,c,x$, and if $x\neq y$, then each vertex of the path $xacy$ is connected to $b$. 

Let us pick for each triangle in $G$ two edges that are not contained in any other triangle (if there are three such edges, we pick two arbitrarily). The resulting graph $G'$ clearly has twice as many edges as the number of triangles in $G$. Observe that $G'$ is triangle-free, since $G'$ is a subgraph of $G$, and for every triangle of $G$, one of its edges was not picked. Therefore, by Mantel's theorem $G'$ has at most $\lfloor n^2/4\rfloor$ edges, hence $G$ has at most $\lfloor n^2/8\rfloor$ triangles.

Assume now that $G$ contains a $K_4$ with vertices $U=\{v_1,v_2,v_3,v_4\}$.  
Assume first that there is a vertex $v$ that is contained in at most $(2n-5)/8$ triangles.
 Then  $\ex(n,K_3,\hat{P_4})\le\ex(n-1,K_3,\hat{P_4})+(2n-5)/8$, thus $f(n)-f(n-1)=\ex(n,K_3,\hat{P_4})-\lfloor n^2/8\rfloor- \ex(n-1,K_3,\hat{P_4})+\lfloor (n-1)^2/8\rfloor<0$. As there is no $P_4$ in the neighborhood $N(v)$ of $v$ in $G$, the connected components of $G[N(v)]$ are stars or triangles. This means that there are at most $|N(v)|$ edges inside $N(v)$, thus $v$ is in at most $|N(v)|$ triangles. Therefore, we are done if there is a vertex
with degree at most $(2n-5)/8$.
Assume now that there are at most $n-3$ triangles containing a vertex from $U$. Then $\ex(n,K_3,\hat{P_4})\le\ex(n-4,K_3,\hat{P_4})+n-3$, thus $f(n)<f(n-1)$.

Assume from now on that every vertex has degree more than $(2n-5)/8$ (thus at least $(n-4)/4$), and for every $K_4$, there are more than $n-3$ triangles that contain at least one of its vertices.
Observe that every vertex outside $U$ is connected to at most one vertex of $U$ by the $\hat{P_4}$-free property. Let $V_i=N(v_i)\setminus U$ for $i\le 4$ and $V_5=V(G)\setminus (U\cup V_1\cup V_2\cup V_3\cup V_4)$, then the $V_i$'s are pairwise disjoint, thus there are at most $n-4$ triangles containing one vertex from $U$. There are 4 triangles inside $U$ and no triangles containing exactly two vertices of $U$, thus there are $n$ or $n-1$ or $n-2$ triangles containing at least one vertex from $U$. In particular, at least two of the $v_i$'s have the property that $v_i$ is in $|N(v_i)|$ triangles. This can happen only if $G[V_i]$ consists of vertex-disjoint triangles. Without loss of generality this holds for $V_1$ and $V_2$. 

Observe that the above holds for every $K_4$, i.e. there are two vertices in every $K_4$ such that their neighborhood induces vertex disjoint triangles.  In particular, there is a such vertex $v\in V_1$. Observe that only one of the triangles in $N(v)$ intersects $V_1$, and every triangle from $N(v)$ contains at most one vertex from $V_2$. Indeed, if $a,b,c$ form such a triangle, with $a,b\in V_2$, then there is a triangle $a,b,c'$ inside $V_2$. Then $vcac'$ is a $P_4$ in the neighborhood in $b$,
a contradiction. 

Recall that $v$ has degree at least $(n-4)/4$ and its neighborhood consists of at least $(n-4)/12$ vertex disjoint triangles.
This means that out of the 
at least $(n-4)/12$ triangles in $N(v)$, one is inside $V_1\cup \{v_1\}$, and the other at least $(n-16)/12$ triangles each contain an edge from $V_3\cup V_4\cup V_5$. Observe that these are independent edges.

Let $A$ be denote the set of vertices in $G$ that are contained in some $K_4$. Then $|A|\ge |U|+|V_1|+|V_2|+(n-16)/6\ge (8n+8)/12$. Since for $i\le 4$ we have $|V_i|\ge (n-16)/4$ but the union of these disjoint four sets have order at most $n-4$, we have that $|V_i|\le (n+32)/4$. Therefore, the degree of vertices in $A$ is at most $(n+44)/4$. Let $B$ be denote the set of vertices in $G$ that are not contained in any $K_4$. A vertex of $B$ is connected to at most one vertex in each triangle inside $V_1$ and $V_2$, and to at most one vertex of each of the $(n-16)/12$ independent edges inside $V_3\cup V_4\cup V_5$ that form triangles with $v$. This means that the degree of a vertex of $B$ is at most $n-1-(n-3)/3-(n-6)/12=(7n+6)/12$. Therefore, the sum of the degrees in $G$ is at most $\frac{8n+8}{12}\frac{n+44}{4}+\frac{4n-8}{12}\frac{7n+6}{12}=\frac{13n^2+262n+252}{36}$. Using that the number of triangles containing a vertex is at most its degree, it gives an upper bound on three times the number of triangles in $G$ (as we count every triangle three times this way). Therefore, the number of triangles is at most $\frac{13n^2+262n+252}{108}<\lfloor n^2/8\rfloor$ if $n\ge 525$, completing the proof.

\section{Remarks}
 Given a graph $F$, a hypergraph $\cH$ is a \textit{Berge copy of $F$} (in short Berge-$F$) if $V(F)\subset V(\cH)$ and there is a bijection $\phi:E(F)\rightarrow E(\cH)$ such that for each edge $e\in E(F)$ we have $e\subset \phi(e)$. In other words, we can obtain $\cH$ from $F$ by enlarging the edges arbitrarily. Note that there may be several non-isomorphic Berge copies of $F$. We denote by $\ex_r(n,\text{Berge-}F)$ the largest number of hyperedges in an $r$-uniform $n$-vertex hypergraph that does not contain any of the Berge copies of $F$.

Berge hypergraphs were introduced by Gerbner and Palmer \cite{gp1}, extending the well-studied notion of Berge cycles and paths. The connection of Berge hypergraphs to generalized Tur\'an problems was shown in \cite{gp2}, by the following simple proposition: $\ex(n,K_r,F)\le \ex_r(n,\text{Berge-}F)\le \ex(n,K_r,F)+\ex(n,F)$. We remark that the lower bound follows by observing that the hypergraph consisting of the vertex sets of copies of $K_r$ in an $F$-free graph is Berge-$F$-free. We will connect our problem to Berge hypergraphs differently.

Given a graph $G$, let us denote by $\cT(G)$ the 3-uniform hypergraph with vertex set $V(G)$, where $\{a,b,c\}$ is a hyperedge if and only if there is a triangle with vertices $a,b,c$ in $G$.

\begin{proposition}
$\cT(G)$ is Berge-$K_3$-free if and only if $G$ is $\hat{P_4}$-free and $K_4$-free.
\end{proposition}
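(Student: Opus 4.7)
The plan is to verify both implications by exhibiting explicit witnesses, using the observation that a Berge-$K_3$ in a $3$-uniform hypergraph amounts to a choice of three distinct hyperedges $T_1, T_2, T_3$ and three distinct vertices $a, b, c$ with $\{a,b\}\subset T_1$, $\{b,c\}\subset T_2$, $\{a,c\}\subset T_3$.

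For the direction ``$G$ contains $\hat{P_4}$ or $K_4$ $\Rightarrow$ $\cT(G)$ contains a Berge-$K_3$'', I would display the copies directly. If $\{a,b,c,d\}$ spans a $K_4$ in $G$, then the triangles $\{a,b,d\}, \{b,c,d\}, \{a,c,d\}$ are three distinct hyperedges of $\cT(G)$ covering the pairs $\{a,b\}, \{b,c\}, \{a,c\}$, hence witness a Berge-$K_3$ on $\{a,b,c\}$. If $G$ contains a $\hat{P_4}$ with path $p_1p_2p_3p_4$ and apex $v$, then $\{v,p_1,p_2\}, \{v,p_2,p_3\}, \{v,p_3,p_4\}$ are three pairwise distinct hyperedges of $\cT(G)$ witnessing a Berge-$K_3$ on $\{v,p_2,p_3\}$.

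For the converse, I would start from a Berge-$K_3$ in $\cT(G)$ given by distinct vertices $a,b,c$ and distinct triangles $T_1=\{a,b,x_1\}$, $T_2=\{b,c,x_2\}$, $T_3=\{a,c,x_3\}$, and case-split on the multiset $\{x_1,x_2,x_3\}$ and its intersection with $\{a,b,c\}$. If two of the $x_i$ are equal to a common vertex $w$, then two of the triangles together supply the three edges from $w$ to $\{a,b,c\}$ while the remaining triangle supplies the edge missing among $\{a,b,c\}$, so $\{a,b,c,w\}$ spans a $K_4$. If $x_1,x_2,x_3$ are pairwise distinct and none of them lies in $\{a,b,c\}$, then $a$ is adjacent via $T_1$ and $T_3$ to the four distinct vertices $b,c,x_1,x_3$, and the edges $x_1b$ (from $T_1$), $bc$ (from $T_2$), $cx_3$ (from $T_3$) form a $P_4$ on these vertices, exhibiting a $\hat{P_4}$ centered at $a$.

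It remains to handle the possibilities where some $x_i$ lies in $\{a,b,c\}$. The built-in constraints $x_1\neq a,b$, $x_2\neq b,c$, $x_3\neq a,c$ together with the distinctness of the $T_j$ force that at most one of the coincidences $x_1=c$, $x_2=a$, $x_3=b$ can hold, and in each such case the apex of the $\hat{P_4}$ shifts to the corresponding vertex of $\{a,b,c\}$. For instance, if $x_1=c$ then $T_1=\{a,b,c\}$, so $c$ is adjacent to $a,b,x_2,x_3$; if additionally $x_2=x_3$ we obtain a $K_4$ on $\{a,b,c,x_2\}$, and otherwise the path $x_2,b,a,x_3$ together with apex $c$ is a $\hat{P_4}$ (with the required distinctness of path vertices forced by the exclusions on $x_2,x_3$ and by $T_1\neq T_2, T_3$). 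The two symmetric cases $x_2=a$ and $x_3=b$ are analogous. I expect the main obstacle to be precisely this bookkeeping: systematically ensuring, in each sub-case, that the four chosen path vertices are distinct and that the distinctness of $T_1, T_2, T_3$ rules out the problematic collisions; once the constraints are recorded, each sub-case reduces to pointing out the promised $K_4$ or $\hat{P_4}$.
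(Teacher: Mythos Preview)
Your proposal is correct and follows essentially the same approach as the paper: exhibit explicit triangles for the ``if'' direction, and case-split on the extra vertices $x_i$ for the ``only if'' direction. The paper's forward argument is slightly more economical: having observed that at most one of the three hyperedges can equal $\{a,b,c\}$, it fixes two whose extra vertices $x,y$ lie outside $\{a,b,c\}$ and then only distinguishes $x=y$ (giving a $K_4$) from $x\neq y$ (giving the $\hat{P_4}$ with path $xacy$ and apex $b$), thereby avoiding your separate treatment of $x_3$ and the associated sub-cases.
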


\begin{proof}
Let us assume that there is a Berge-$K_3$ in $\cT(G)$. It consists of 3 hyperedges that correspond to a graph triangle with vertices $a,b,c$. Then the 3 hyperedges are $\{a,b,x\}$, $\{b,c,y\}$ and $\{c,a,z\}$. At most one of these hyperedges are $\{a,b,c\}$, thus without loss of generality $x,y\not\in \{a,b,c\}$. The edges $ab,ax,bx,bc,by,cy,ca,cz,az$ are all in $G$. If $x=y$, then there is a $K_4$ in $G$ with vertices $a,b,c,x$. If $x\neq y$, then the vertices of the path $xacy$ are each connected to $b$, giving a $\hat{P_4}$.

If $G$ contains a $K_4$ with vertices $a,b,c,d$, then there is a bijection from the edges of the triangle $a,b,c$ to the hyperedges $\{a,b,d\}$, $\{b,c,d\}$, $\{c,a,d\}$ of $\cT(G)$, giving us a Berge-$K_3$ in $\cT(G)$. If $G$ contains a path $xacy$ such that is vertices are each connected to a fifth vertex $b$, then there is a bijection from the edges of the triangle $a,b,c$ to the hyperedges $\{a,b,x\}$, $\{b,c,y\}$, $\{c,a,b\}$ of $\cT(G)$, giving us a Berge-$K_3$ in $\cT(G)$.
\end{proof}

The above proposition immediately implies that $\ex(n,K_3,\{\hat{P_4},K_4\})\le \ex_3(n,\text{Berge-}K_3)$.
Gy\H ori \cite{gyori} showed that  $\ex_3(n,\text{Berge-}K_3)=\lfloor n^2/8\rfloor$. We remark that the lower bound is obtained by $\cT(G)$, where $G$ is the construction that gives the lower bound in Theorem \ref{main}. 
Using the above results, we immediately obtain that $\ex(n,K_3,\{\hat{P_4},K_4\})= \ex_3(n,\text{Berge-}K_3)=\lfloor n^2/8\rfloor$. To strengthen it and obtain Theorem \ref{main}, we need to show that if a $\hat{P_4}$-free graph contains a $K_4$, then it contains at most $\lfloor n^2/8\rfloor$ triangles.

We remark that the proof of the upper bound of Proposition \ref{mumu} in \cite{mm} also deals separately with the two cases depending on whether a $\hat{P_4}$-free graph is $K_4$-free or not. Our improvement in the first case is by the above observations, and Gy\H ori's proof is incorporated to our proof in Section 2. The second case is dealt by induction in \cite{mm}. However, we cannot prove our improved bound in a similar way, as our Theorem \ref{main} does not hold for every $n$. For example, if $n=7$, then $\lfloor n^2/8\rfloor=6$, but two copies of $K_4$ sharing a vertex is a $\hat{P_4}$-free 7-vertex graph and contains 8 triangles. Therefore, we do not have the base step of the induction, even though the induction step works almost the same as in \cite{mm}. 

Progressive induction, introduced by Simonovits \cite{sim}, is used in such cases (it was first used in the generalized Tur\'an setting in \cite{ger}). The basic idea is to have a stronger induction step, where we also deal with the case of equality. This means that for small values of $n$ $\ex(n,K_3,\hat{P_4})$ can be larger than $\lfloor n^2/8\rfloor$, but this surplus starts decreasing and eventually vanishes. The first paragraph of the proof of the upper bound of Theorem \ref{main} is a simple example of this; we do not go into the details of the general version. Finally, we mention that it is likely that Theorem \ref{main} holds for every $n\ge 8$. If the upper bound for $n=8,9,10,11$ is proved, either by a careful case analysis or by brute force, then ordinary induction could be used instead of progressive induction to complete the proof of Theorem \ref{main}.


\bigskip

\textbf{Funding}: Research supported by the National Research, Development and Innovation Office - NKFIH under the grants KH 130371, SNN 129364, FK 132060, and KKP-133819.

\end{document}